      \def\dC{{\mathbb C}}
      \def\dR{{\mathbb R}}
\def\cD{{\mathcal D}}      \def\cF{{\mathcal F}}
      \def\cL{{\mathcal L}}
   \def\cN{{\mathcal N}}   \def\cO{{\mathcal O}}
\def\cal H{{\mathcal H}}
\def\R{\mathbb{R}}
\def\C{\mathbb{C}}
\def\ran{{\text{\rm ran\,}}}
\def\dom{{\text{\rm dom\,}}}
\def\phi{\varphi}
\def\eps{\varepsilon}
\DeclareMathOperator{\Res}{Res}
\DeclareMathOperator{\clac}{cl_{ac}}
\DeclareMathOperator{\supp}{supp}
\DeclareMathOperator{\cl}{cl}
\DeclareMathOperator{\Real}{Re}
\DeclareMathOperator{\Imag}{Im}
\DeclareMathOperator{\spann}{span}
\newtheorem{theorem}{Theorem}[section]
\newtheorem{proposition}[theorem]{Proposition}
\newtheorem{corollary}[theorem]{Corollary}
\newtheorem{lemma}[theorem]{Lemma}
\newtheorem{definition}[theorem]{Definition}
\theoremstyle{remark}
\newtheorem{remark}[theorem]{Remark}
\numberwithin{equation}{section}
\begin{document}

\title[Titchmarsh--Weyl theory for Schr\"odinger operators]{Titchmarsh--Weyl theory for Schr\"odinger operators on unbounded domains}

\author{Jussi Behrndt}
\author{Jonathan Rohleder}
\email{behrndt@tugraz.at \and rohleder@tugraz.at}

\address{Technische Universit\"at Graz,
Institut f\"ur Numerische Mathematik,
Steyrergasse 30,
8010~Graz,
Austria}


\begin{abstract}
In this paper it is proved that the complete spectral data of selfadjoint Schr\"o\-din\-ger operators 
on unbounded domains can be described with an associated 
Dirichlet-to-Neumann map. In particular, a characterization of the isolated and embedded eigenvalues, 
the corresponding eigenspaces, as well as the continuous and 
absolutely continuous spectrum in terms of the limiting behaviour of the Dirichlet-to-Neumann map is obtained. 
Furthermore, a sufficient criterion for the absence of singular continuous spectrum is provided.
The results are natural multidimensional analogs of 
classical facts from singular Sturm--Liouville theory. 
\end{abstract}


\maketitle

\section{Introduction}

The Titchmarsh--Weyl $m$-function associated with a Sturm--Liouville differential expression 
plays a fundamental role in the direct and inverse spectral theory of the corresponding ordinary differential operators. 
It was introduced by H.~Weyl in his famous work~\cite{W10} and was further studied by E.\ C.~Titchmarsh in \cite{T62}, 
who investigated the analytic nature of this function as well as its connection to the spectrum. 
For a one-dimensional Schr\"odinger differential expression 
$- \frac{d^2}{d x^2} + q$ on the half-line $(0, \infty)$ with a bounded, real valued potential $q$
the Titchmarsh--Weyl $m$-function $m (\cdot)$ may be defined as
\begin{align*}
 m (\lambda) f_\lambda (0) = f_\lambda' (0),\qquad \lambda\in\dC\setminus\dR,
\end{align*}
where $f_\lambda$ is the unique solution (up to scalar multiples) in $L^2 (0, \infty)$ of the equation $- f'' + q f = \lambda f$; equivalently
$m(\lambda)$ combines two fundamental solutions to a solution in $L^2 (0, \infty)$.
The prominent role of the function $\lambda \mapsto m (\lambda)$ in the direct and inverse spectral theory of the associated selfadjoint 
operators is due to the celebrated fact that the complete spectral data is encoded and can be recovered from the knowledge of
$m (\cdot)$; cf.~\cite{CE69,T62}. Therefore the Titchmarsh--Weyl $m$-function became an indispensable tool 
in the spectral analysis of Sturm--Liouville differential operators, as well as more general Hamiltonian and canonical systems; 
for a small selection from the vast number of contributions see, e.g.,~\cite{A57,BHSW11,BCEP99,DK99,GNP97,GP87,HS81,KST12,S84,S96} for 
direct spectral problems and~\cite{B01,B52,BW05,GS96,GS00-1,GS00,LW11,M86,S99} for inverse problems.

The aim of the present paper is to develop Titchmarsh--Weyl theory in the multidimensional
setting for partial differential operators. Our focus is on selfadjoint Schr\"odin\-ger operators 
on unbounded domains. In our main results we prove that the $\lambda$-dependent Dirichlet-to-Neumann map $M (\lambda)$ on the boundary of
the domain, as the natural multidimensional analog of the Titchmarsh--Weyl $m$-function, 
determines the spectrum of the selfadjoint Schr\"odinger operator $A  = - \Delta + q$ with a bounded, real valued potential $q$ 
and a Dirichlet boundary condition uniquely.
We obtain an explicit characterization of the isolated and embedded eigenvalues, the corresponding eigenspaces, and the continuous and 
absolutely continuous spectrum in terms of the limiting behaviour of the Dirichlet-to-Neumann map $M (\lambda)$ when $\lambda$ approaches the 
real axis, and we provide a 
sufficient criterion for the absence of singular continuous spectrum. For instance, we show that $\lambda$ is an eigenvalue of $A$ if and only if the strong limit
 \textup{s}-$\lim_{\eta \searrow 0} \eta M (\lambda + i \eta)$ is non-trivial.
Our main results Theorem~\ref{thm:eigenvalues}, \ref{ACtheorem} and \ref{SCtheorem} extend to other selfadjoint realizations with Neumann
and more general (nonlocal) Robin boundary conditions, and also remain valid for second order, formally symmetric, 
uniformly elliptic differential operators under appropriate assumptions on the coefficients. In order to avoid technical complications, 
in this paper we discuss only the
case of an exterior domain with a $C^2$-boundary. The results can be extended to Lipschitz domains and to domains with non-compact boundaries; 
cf. Remark~\ref{remark1}. We mention that for bounded domains matters simplify essentially: In that case the spectrum of $A$ is purely 
discrete and it is known that the poles of the function $M (\cdot)$ coincide with the eigenvalues of $A$, see, e.g.,~\cite{NSU88} and \cite{BR12}.

In the recent past there has been a strong interest in combining and applying modern techniques from operator theory to 
partial differential equations. In the context of Titchmarsh--Weyl theory for elliptic differential equations we point out the 
paper \cite{AP04} by W.\,O.~Amrein and D.\,B.~Pearson, where a typical convergence property for Titchmarsh--Weyl $m$-functions in 
the one-dimensional
situation was extended to a multidimensional setting. We also refer the reader to the classical works~\cite{G68,LM72,V52} and to the more recent
contributions \cite{AB09,BL07,BL11,BGW09,BMNW08,GM09,GM11,GBook09,G11,M04,P12,R09} for other aspects of Titchmarsh--Weyl theory and 
spectral theory of elliptic
differential operators.
However, to  the best of our knowledge no attempts were made so far to extend the well-known results on the characterization
of the spectrum  
of ordinary differential operators in terms of the
Titchmarsh--Weyl $m$-function to elliptic differential operators on unbounded domains. We fill this gap in the present paper
and provide the natural multidimensional analogs. We also mention that the results in this paper can be generalized and interpreted 
in the more abstract context of boundary triples
and their Weyl functions from extension and spectral theory of symmetric and selfadjoint operators; cf. \cite{BL07,BL11,BMN02,DHMS06,DM91,DM95}.

\section{Preliminaries}

Let $\Omega$ be an open subset of $\R^n$, $n \geq 2$, such that $\R^n \setminus \overline{\Omega}$ is bounded, nonempty, and has a $C^2$-boundary 
$\partial \Omega$; 
for more general settings see Remark~\ref{remark1}. With $H^s (\Omega)$ and $H^s (\partial \Omega)$ we denote the 
Sobolev spaces of the order $s > 0$ on $\Omega$ and $\partial \Omega$, respectively. Moreover, for $u \in H^2 (\Omega)$ we denote 
by $u |_{\partial \Omega} \in H^{3/2} (\partial \Omega)$ the trace 
and by $\partial_\nu u |_{\partial \Omega} \in H^{1/2} (\partial \Omega)$ the trace of the 
derivative with respect to the outer unit normal. 

Let $q : \Omega \to \R$ be a bounded, measurable function. As usual, we define the {\em Dirichlet operator} $A$ in $L^2 (\Omega)$ 
corresponding to the Schr\"odinger differential expression $- \Delta + q$ by
\begin{align}\label{Dirichlet}
 A u = - \Delta u + q u, \quad \dom A = \left\{ u \in H^2 (\Omega) : u |_{\partial \Omega} = 0 \right\}.
\end{align}
It is well known that $A$ is a selfadjoint operator in $L^2 (\Omega)$ and that the spectrum $\sigma (A)$ of $A$ is bounded from 
below and accumulates to $+ \infty$; cf.~\cite{EE87,E98,LM72}.

Let $\lambda$ belong to the resolvent set $\rho (A)$ of $A$ and define
\begin{align}\label{Nlambda}
 \cN_\lambda = \left\{ u \in H^2 (\Omega) : - \Delta u + q u = \lambda u \right\}.
\end{align}
In order to define the Dirichlet-to-Neumann map associated with the differential expression $- \Delta + q$ recall that for each 
$\lambda \in \rho (A)$ and each $g \in H^{3/2} (\partial \Omega)$ the boundary value problem
\begin{align}\label{bvp}
 - \Delta u + q u = \lambda u, \quad u |_{\partial \Omega} = g,
\end{align}
has a unique solution $u_\lambda \in H^2 (\Omega)$; this follows essentially from the surjectivity of the trace map $H^2 (\Omega) \ni u \mapsto u |_{\partial \Omega} \in H^{3/2} (\partial \Omega)$. Thus for $\lambda \in \rho (A)$ the {\em Poisson operator} $\gamma (\lambda)$ from $L^2 (\partial \Omega)$ to $L^2 (\Omega)$ given by
\begin{align}\label{Poisson}
 \gamma (\lambda)  g = u_\lambda, \quad \dom \gamma (\lambda) = H^{3/2} (\partial \Omega),
\end{align}
is well-defined, where $u_\lambda$ is the unique solution of~\eqref{bvp} in $H^2 (\Omega)$. We remark that 
$\ran \gamma (\lambda) = \cN_\lambda$ holds. 

\begin{definition}\label{def1}
For $\lambda \in \rho (A)$ the {\em Dirichlet-to-Neumann map} $M (\lambda)$ in $L^2 (\partial \Omega)$ is defined by
\begin{align}\label{DN}
 M (\lambda) g = \partial_\nu u_\lambda |_{\partial \Omega}, \quad \dom M (\lambda) = H^{3/2} (\partial \Omega),
\end{align}
where $u_\lambda$ is the unique solution of~\eqref{bvp} in $H^2 (\Omega)$.
\end{definition}

The following proposition is crucial for the proofs of the main results in the next section.

\begin{proposition}\label{simplicity}
The linear space
\begin{align}\label{defectdense}
 \spann \bigcup_{\lambda \in \C \setminus \R} \cN_\lambda
\end{align}
is dense in $L^2 (\Omega)$.
\end{proposition}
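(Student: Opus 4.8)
The plan is to prove that the orthogonal complement of the space in \eqref{defectdense} is trivial. So suppose $f \in L^2(\Omega)$ satisfies $f \perp \cN_\lambda$ for every $\lambda \in \C \setminus \R$, and set $w_\lambda := (A - \lambda)^{-1} f \in \dom A$, so that $w_\lambda|_{\partial\Omega} = 0$. Since $\ran \gamma(\bar\lambda) = \cN_{\bar\lambda}$ and $\gamma(\bar\lambda)g$ has Dirichlet trace $g$, the boundary values $u|_{\partial\Omega}$ for $u \in \cN_{\bar\lambda}$ exhaust $H^{3/2}(\partial\Omega)$, which is dense in $L^2(\partial\Omega)$. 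For such $u$ Green's identity gives
\[
 0 = \langle f, u\rangle = \langle (A-\lambda) w_\lambda, u\rangle - \langle w_\lambda, (-\Delta + q - \bar\lambda) u\rangle = \int_{\partial\Omega} \bigl(\partial_\nu w_\lambda\,\overline{u} - w_\lambda\,\overline{\partial_\nu u}\bigr)\,d\sigma = \int_{\partial\Omega} \partial_\nu w_\lambda\,\overline{u|_{\partial\Omega}}\,d\sigma,
\]
where the second term on the left vanishes because $u \in \cN_{\bar\lambda}$, and the last equality uses $w_\lambda|_{\partial\Omega} = 0$. Letting $u|_{\partial\Omega}$ run through $H^{3/2}(\partial\Omega)$, I conclude $\partial_\nu w_\lambda|_{\partial\Omega} = 0$. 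Thus, for every $\lambda \in \C\setminus\R$, the function $w_\lambda$ has vanishing Dirichlet \emph{and} Neumann traces.

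Next I would exploit these vanishing Cauchy data. Extending $w_\lambda$ by zero across $\partial\Omega$ yields a function $\tilde w_\lambda \in H^2(\R^n)$, and because both traces vanish no singular layer along $\partial\Omega$ is produced, so $\tilde w_\lambda$ solves $(-\Delta + \tilde q - \lambda)\tilde w_\lambda = \tilde f$ on $\R^n$, where $\tilde f$ and $\tilde q$ are the extensions of $f$ and $q$ by zero and $\tilde f$ is supported in $\overline\Omega$. With $\tilde A := -\Delta + \tilde q$, a selfadjoint operator in $L^2(\R^n)$ with $\dom\tilde A = H^2(\R^n)$ that is bounded from below, this says $\tilde w_\lambda = (\tilde A - \lambda)^{-1}\tilde f$. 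Hence $(\tilde A - \lambda)^{-1}\tilde f$ vanishes on the nonempty bounded open set $\mathcal O := \R^n \setminus \overline\Omega$ for every $\lambda \in \C \setminus \R$.

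To turn this into $\tilde f = 0$ I would pass from the resolvent to the spectral measure of $\tilde A$. For $\phi \in L^2(\mathcal O)$ the scalar function $\lambda \mapsto \langle(\tilde A - \lambda)^{-1}\tilde f, \phi\rangle$ is the Borel transform of the measure $B \mapsto \langle E_{\tilde A}(B)\tilde f, \phi\rangle$ and vanishes identically on $\C\setminus\R$; by Stieltjes inversion the measure is zero, i.e. $E_{\tilde A}(B)\tilde f$ vanishes on $\mathcal O$ for every Borel set $B$. Here one genuinely needs all $\lambda$ at once: a single resolvent solves a homogeneous equation only on $\mathcal O$, and across $\partial\Omega$ the inhomogeneity $\tilde f$ reappears, so elliptic unique continuation applied to one $\lambda$ cannot propagate the vanishing into $\Omega$. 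I would therefore encode the whole family through the heat semigroup. Choosing $c$ large enough that $\tilde A + c$ is uniformly positive, the function $u(s) := e^{-s(\tilde A + c)}\tilde f = \int e^{-s(t+c)}\,dE_{\tilde A}(t)\tilde f$ is a genuine solution of the parabolic equation $\partial_s u = \Delta u - (\tilde q + c)u$ on $(0,\infty)\times\R^n$, and by the previous step $u$ vanishes on $(0,\infty)\times\mathcal O$.

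Since $\R^n$ is connected and $\mathcal O$ is open and nonempty, spatial unique continuation for parabolic equations with a bounded zeroth-order coefficient forces $u \equiv 0$, whence $\tilde f = \lim_{s\to 0^+} u(s) = 0$ and therefore $f = 0$, proving the claimed density. I expect the parabolic unique continuation step to be the main obstacle, as it is precisely the mechanism that converts the vanishing of the entire spectral mass of $\tilde f$ on $\mathcal O$ into $\tilde f = 0$; the supporting facts (that the zero-extension lies in $H^2(\R^n)$ without a boundary layer, and that $u(s)$ is a bona fide parabolic solution) are routine consequences of the vanishing Cauchy data and the functional calculus.
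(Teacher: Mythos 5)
Your proof is correct, and it shares the paper's overall skeleton (orthogonal complement $\to$ pass to the whole-space operator $\widetilde A$ $\to$ show the resolvent of $\widetilde A$ applied to the extended function vanishes off $\Omega$ $\to$ Stieltjes/Stone inversion $\to$ semigroup plus unique continuation), but it differs at both pivotal steps. First, you obtain the vanishing of $(\widetilde A - \lambda)^{-1} \widetilde f$ on $\R^n \setminus \overline{\Omega}$ by showing that $w_\lambda = (A-\lambda)^{-1} f$ has vanishing Cauchy data (via Green's identity, the surjectivity of the trace onto $H^{3/2}(\partial\Omega)$, and the $H^2$ zero-extension); the paper instead runs a shorter duality argument, testing $f$ against $(\widetilde A - \bar\lambda)^{-1}\widetilde v$ for $\widetilde v$ supported off $\Omega$, whose restrictions to $\Omega$ lie in $\cN_{\bar\lambda}$ --- no traces or Green's identity needed (your route also requires justifying Green's identity on the unbounded domain and carries a harmless sign slip in the displayed boundary integral). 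Second, and more substantively, you convert ``all spectral mass of $\widetilde f$ vanishes on $\cO$'' into $\widetilde f = 0$ via the heat semigroup $e^{-s(\widetilde A + c)}$ and \emph{spacelike unique continuation for parabolic equations}, whereas the paper (following B\"ar--Strohmaier) uses the Poisson-type semigroup $T(t) = e^{-t \widetilde A^{1/2}}$, whose orbits solve the \emph{elliptic} equation $\bigl(-\partial_t^2 - \Delta_x + \widetilde q\bigr) T(t)\widetilde u = 0$ in $n+1$ variables, so that the classical elliptic unique continuation theorem (Reed--Simon, Theorem XIII.63) finishes the job. Your parabolic step is a genuine theorem (Saut--Scheurer, J.\ Differential Equations 66 (1987); solutions of $|\partial_s u - \Delta u| \leq C(|u| + |\nabla u|)$ vanishing on an open space-time set vanish on its horizontal component), so the proof stands, but it must be cited as such since it is considerably less elementary than the elliptic result; the square-root trick is precisely what lets the paper avoid invoking parabolic unique continuation at all. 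In short: same architecture, interchangeable intermediate derivations, and a trade of parabolic machinery for the paper's elliptic reduction.
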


\begin{proof}
Let us denote by $\widetilde q$ the extension of the potential $q$ by zero to all of $\R^n$. Then
\begin{align*}
 \widetilde A u = - \Delta u + \widetilde q u, \quad \dom \widetilde A = H^2 (\R^n),
\end{align*}
is a selfadjoint operator in $L^2 (\R^n)$ which is semibounded from below by the essential infimum of $\widetilde q$. 
Without loss of generality we assume that the lower bound $\mu$ of $\widetilde A$ is positive; this can always be achieved 
by adding a constant, thereby not changing the linear space in \eqref{defectdense}. Choose a function $\widetilde v \in L^2 (\R^n)$ 
such that $\widetilde v |_\Omega = 0$, and define 
$$\widetilde u_{\lambda, \widetilde v} := (\widetilde A - \lambda)^{-1} \widetilde v,\qquad \lambda \in \C \setminus \R.$$ 
Then the restriction $u_{\lambda, \widetilde v}$ of $\widetilde u_{\lambda, \widetilde v}$ 
to $\Omega$ satisfies $u_{\lambda, \widetilde v} \in H^2 (\Omega)$ and $-\Delta u_{\lambda, \widetilde v} + q u_{\lambda, \widetilde v} = \lambda u_{\lambda, \widetilde v}$, 
thus $u_{\lambda, \widetilde v} \in \cN_\lambda$ for all $\lambda \in \C \setminus \R$. 

Let $u \in L^2 (\Omega)$ be orthogonal to $\cN_\lambda$ for all $\lambda \in \C \setminus \R$ and let $\widetilde u$ denote the extension by zero of $u$ to $\R^n$. Then, in particular, 
\begin{align*}
 0 = ( u, u_{\overline \lambda, \widetilde v} ) = \big( \widetilde u, (\widetilde A - \overline \lambda)^{-1} \widetilde v \big)_{L^2 (\R^n)} = \big((\widetilde A - \lambda)^{-1} \widetilde u, \widetilde v \big)_{L^2 (\R^n)}
\end{align*}
for all $\lambda \in \C \setminus \R$, where $(\cdot, \cdot)$ and $(\cdot, \cdot)_{L^2 (\R^n)}$ are the inner products in $L^2 (\Omega)$ and $L^2 (\R^n)$, respectively. Since this identity holds for an arbitrary $\widetilde v \in L^2 (\R^n)$ with $\widetilde v |_\Omega = 0$, it follows
\begin{align}\label{resVan}
 \big( \widetilde A - \lambda \big)^{-1} \widetilde u = 0 \quad \text{on} \quad \R^n \setminus \Omega
\end{align}
for all $\lambda \in \C \setminus \R$. 

Following an idea of \cite[Section 3]{BS01} we consider the semigroup $T (t) = e^{- t \widetilde A^{1/2}}$, $t \geq 0$, which is generated by the square root of the uniformly positive operator $\widetilde A$. Then $t \mapsto T (t) \widetilde u$ is twice differentiable and we have
\begin{align*}
 \frac{d^2}{d t^2} T (t) \widetilde u = \widetilde A T (t) \widetilde u
\end{align*}
for $t > 0$, which implies
\begin{align}\label{n+1Var}
 \Big( - \frac{\partial^2}{\partial t^2} - \sum_{j = 1}^n \frac{\partial^2}{\partial x_j^2} + \widetilde q (x) \Big) T (t) \widetilde u (x) = 0, \quad x \in \R^n, t > 0,
\end{align}
in the distributional sense. In particular, by elliptic regularity, $(x, t) \mapsto T (t) \widetilde u (x)$ belongs locally to $H^2$ on $\R^n \times (0, \infty)$. Moreover, Stone's formula for the spectral measure $E (\cdot)$ of $\widetilde A$ and~\eqref{resVan} yield that
\begin{align*}
 E ((a, b)) \widetilde u = \lim_{\eps \searrow 0} \frac{1}{2 \pi i} \int_a^b \Big( \big(\widetilde A - (y + i \eps) \big)^{-1} \widetilde u - \big(\widetilde A - (y - i \eps) \big)^{-1} \widetilde u \Big) d y
\end{align*}
vanishes on $\R^n \setminus \Omega$ for all $a < b$ such that $a, b$ are no eigenvalues of $\widetilde A$. Consequently we have
\begin{align*}
 T (t) \widetilde u = \int_\mu^\infty e^{- t \sqrt{\lambda}} d E (\lambda) \widetilde u = 0 \quad \text{on} \quad \R^n \setminus \Omega
\end{align*}
for each $t > 0$. Therefore the function $(x, t) \mapsto T (t) \widetilde u (x)$ vanishes on $\R^n \setminus \Omega \times (0, \infty)$. 
From this and~\eqref{n+1Var} it follows by a unique continuation argument that $T (t) \widetilde u (x) = 0$ for all 
$(x, t) \in \R^n \times (0, \infty)$; see, e.g.,~\cite[Theorem~XIII.63]{RS78}. Thus $T (t) \widetilde u$ vanishes identically 
on $\R^n$ for all $t > 0$ and, taking the limit $t \searrow 0$, we obtain $\widetilde u = 0$. This implies 
$u = 0$ and hence the linear space \eqref{defectdense} is dense in $L^2 (\Omega)$.
\end{proof}

\begin{remark}\label{generalization}
The proof of Proposition~\ref{simplicity} shows that also $\spann \bigcup_{\lambda \in D} \cN_\lambda$ is dense in $L^2 (\Omega)$ 
with $D = \{x + i y : x \in \R,\, 0 < |y| < \eps \}$ for an arbitrary $\eps > 0$. In fact, with the help of the identity theorem for holomorphic functions it can be shown 
that $\C \setminus \R$ in \eqref{defectdense} can even be replaced by an arbitrary subset of~$\rho (A)$ with an accumulation point in~$\rho (A)$.
\end{remark}

\begin{remark}
The statement of Proposition~\ref{simplicity} is equivalent to the fact that the symmetric restriction
\begin{align*}
 S u = - \Delta u + q u, \quad \dom S = \left\{u \in \dom A : \partial_\nu u |_{\partial \Omega} = 0 \right\},
\end{align*}
of the Dirichlet operator in $L^2 (\Omega)$ is {\em simple} or {\em completely non-selfadjoint}; 
cf.~\cite[Chapter~VII-81]{AG93} and~\cite{K49}. The same property is known to hold for the minimal operator realizations of 
certain ordinary differential expressions which are in the limit point case at one endpoint, see~\cite{G72}.
\end{remark}

\section{Titchmarsh--Weyl theory for Schr\"odinger operators:\\ A characterization of the Dirichlet spectrum}

In this section we show how the isolated and embedded eigenvalues as well as the continuous spectrum of the Dirichlet operator $A$ in~\eqref{Dirichlet}
can be recovered from the limiting behaviour of the Dirichlet-to-Neumann map $M (\lambda)$ in~\eqref{DN} when $\lambda$ 
approaches the real axis. Moreover, we characterize the absolutely continuous spectrum of $A$ and prove a criterion for 
the absence of singular continuous spectrum.

As a preparation we recall some statements on the Poisson operator $\gamma (\lambda)$ in~\eqref{Poisson}, the Dirichlet-to-Neumann 
map $M (\lambda)$, and their relation to the resolvent of $A$. Their proofs are similar to the proof of~\cite[Lemma~2.4]{BR12} and will be omitted.
We also mention that in more abstract settings analog formulas are well known, see~\cite{BL07,DM91}.

\begin{lemma}\label{lemma1}
Let $\lambda, \zeta \in \rho (A)$, let $\gamma(\lambda), \gamma (\zeta)$ be the Poisson operators in~\eqref{Poisson}, and let $M (\lambda), M (\zeta)$ be the Dirichlet-to-Neumann maps in~\eqref{DN}. Then the following assertions hold.
\begin{enumerate}
 \item $\gamma (\lambda)$ is a bounded, densely defined operator from $L^2 (\partial \Omega)$ to $L^2 (\Omega)$. Its adjoint $\gamma (\lambda)^* : L^2 (\Omega) \to L^2 (\partial \Omega)$ is given by
 \begin{align*}
  \gamma (\lambda)^* u = - \partial_\nu \left( (A - \overline \lambda)^{-1} u \right) |_{\partial \Omega}, \quad u \in L^2 (\Omega).
 \end{align*}
 \item The identity
 \begin{align*}
  \gamma (\lambda) = \left( I + (\lambda - \zeta) (A - \lambda)^{-1} \right) \gamma (\zeta)
 \end{align*}
 holds.
 \item The relation
 \begin{align*}
  (\overline \zeta - \lambda) \gamma (\zeta)^* \gamma (\lambda) g = M (\lambda) g - M (\zeta)^* g, \quad g \in H^{3/2} (\partial \Omega),
 \end{align*}
 holds and $M (\overline \lambda) \subset M (\lambda)^*$. 
 \item $M (\lambda)$ is a densely defined, unbounded operator in $L^2 (\partial \Omega)$ and satisfies
\begin{align}\label{Weylformula}
 M (\lambda) = \Real M (\zeta) - \gamma (\zeta)^* \big( (\lambda - \Real \zeta) + (\lambda - \zeta) (\lambda - \overline \zeta) (A - \lambda)^{-1} \big) \gamma (\zeta);
 \end{align}
in particular, the limit $\lim_{\eta \searrow 0} \eta M (\mu + i \eta) g$ exists in $L^2 (\partial \Omega)$ for all $\mu \in \R$ and all $g \in H^{3/2} (\partial \Omega)$.
\end{enumerate}
\end{lemma}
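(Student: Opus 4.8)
Every part rests on Green's second identity: for $u,v\in H^2(\Omega)$,
\begin{align*}
 \big((-\Delta+q)u,v\big)-\big(u,(-\Delta+q)v\big)=\big(u|_{\partial\Omega},\partial_\nu v|_{\partial\Omega}\big)_{\partial\Omega}-\big(\partial_\nu u|_{\partial\Omega},v|_{\partial\Omega}\big)_{\partial\Omega},
\end{align*}
the potential contributions cancelling since $q$ is real; here $(\cdot,\cdot)_{\partial\Omega}$ is the inner product of $L^2(\partial\Omega)$. For (i) I would fix $g\in H^{3/2}(\partial\Omega)$ and $u\in L^2(\Omega)$ and apply this identity to $u_\lambda=\gamma(\lambda)g\in\cN_\lambda$ and $v=(A-\overline\lambda)^{-1}u\in\dom A$. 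Since $v|_{\partial\Omega}=0$, $(-\Delta+q)v=\overline\lambda v+u$, $(-\Delta+q)u_\lambda=\lambda u_\lambda$ and $u_\lambda|_{\partial\Omega}=g$, the identity collapses to $(\gamma(\lambda)g,u)=(g,-\partial_\nu((A-\overline\lambda)^{-1}u)|_{\partial\Omega})_{\partial\Omega}$, which is precisely the asserted form of $\gamma(\lambda)^*$. As $(A-\overline\lambda)^{-1}$ maps $L^2(\Omega)$ boundedly into $\dom A\subset H^2(\Omega)$ and the normal trace $H^2(\Omega)\to H^{1/2}(\partial\Omega)\hookrightarrow L^2(\partial\Omega)$ is bounded, $\gamma(\lambda)^*$ is everywhere defined and bounded; hence the densely defined $\gamma(\lambda)$, having a bounded everywhere-defined adjoint, is itself bounded.

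For (ii) I would simply verify that $w:=\big(I+(\lambda-\zeta)(A-\lambda)^{-1}\big)\gamma(\zeta)g$ solves~\eqref{bvp}: since $(A-\lambda)^{-1}\gamma(\zeta)g\in\dom A$ its trace vanishes, so $w|_{\partial\Omega}=g$, while a one-line computation using $(-\Delta+q)\gamma(\zeta)g=\zeta\gamma(\zeta)g$ gives $(-\Delta+q)w=\lambda w$; uniqueness of the solution of~\eqref{bvp} then forces $w=\gamma(\lambda)g$. For (iii) I would apply Green's identity to $u_\lambda=\gamma(\lambda)g$ and $v_\zeta=\gamma(\zeta)h$ with $g,h\in H^{3/2}(\partial\Omega)$, obtaining $(\lambda-\overline\zeta)(\gamma(\lambda)g,\gamma(\zeta)h)=(g,M(\zeta)h)_{\partial\Omega}-(M(\lambda)g,h)_{\partial\Omega}$. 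Taking $\zeta=\overline\lambda$ annihilates the left-hand side and yields $(M(\lambda)g,h)_{\partial\Omega}=(g,M(\overline\lambda)h)_{\partial\Omega}$ for all $g,h\in H^{3/2}(\partial\Omega)$, i.e. $M(\overline\lambda)\subset M(\lambda)^*$ and in particular $H^{3/2}(\partial\Omega)\subset\dom M(\zeta)^*$; rewriting then $(g,M(\zeta)h)_{\partial\Omega}=(M(\zeta)^*g,h)_{\partial\Omega}$ and $(\gamma(\lambda)g,\gamma(\zeta)h)=(\gamma(\zeta)^*\gamma(\lambda)g,h)_{\partial\Omega}$ gives the stated relation, the equality holding against the dense set of all $h\in H^{3/2}(\partial\Omega)$.

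For (iv) density is immediate from $\dom M(\lambda)=H^{3/2}(\partial\Omega)$. The formula~\eqref{Weylformula} I would obtain algebraically: the case $\lambda=\zeta$ of (iii) gives $M(\zeta)-M(\zeta)^*=(\overline\zeta-\zeta)\gamma(\zeta)^*\gamma(\zeta)$, hence $M(\zeta)^*=\Real M(\zeta)+i\,\Imag\zeta\,\gamma(\zeta)^*\gamma(\zeta)$; inserting this and the identity of (ii) into the relation of (iii) and collecting the $\gamma(\zeta)^*\gamma(\zeta)$-terms reproduces~\eqref{Weylformula} exactly. For the limit I would set $\lambda=\mu+i\eta$ in~\eqref{Weylformula} and multiply by $\eta$: the terms $\eta\Real M(\zeta)g$ and $\eta(\mu+i\eta-\Real\zeta)\gamma(\zeta)^*\gamma(\zeta)g$ tend to $0$, while $(\mu+i\eta-\zeta)(\mu+i\eta-\overline\zeta)\to|\mu-\zeta|^2$, so everything reduces to the behaviour of $\eta(A-\mu-i\eta)^{-1}$. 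By the spectral theorem $(-i\eta)(A-\mu-i\eta)^{-1}\to E(\{\mu\})$ strongly, $E$ the spectral measure of $A$, whence $\eta(A-\mu-i\eta)^{-1}\gamma(\zeta)g\to iE(\{\mu\})\gamma(\zeta)g$; applying the bounded operator $\gamma(\zeta)^*$ shows that the limit exists in $L^2(\partial\Omega)$.

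The one genuinely delicate point is the \emph{unboundedness} of $M(\lambda)$, which does not follow from the algebra above. It reflects that the Dirichlet-to-Neumann map is a first-order elliptic pseudodifferential operator on the compact manifold $\partial\Omega$ whose real principal symbol grows like $|\xi|$ (the half-space model being $\sqrt{-\Delta_{\partial\Omega}+\text{const}}$), so it cannot extend to a bounded operator on $L^2(\partial\Omega)$; concretely one tests $M(\lambda)$ on highly oscillatory boundary data in a coordinate chart to produce functions $g_n$ with $\|g_n\|_{L^2(\partial\Omega)}=1$ and $\|M(\lambda)g_n\|_{L^2(\partial\Omega)}\to\infty$. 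I expect this, together with the careful bookkeeping of domains and adjoints in (iii), to be the main obstacle; the remaining steps are routine consequences of Green's identity, uniqueness for~\eqref{bvp}, and the spectral theorem.
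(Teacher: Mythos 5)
Your proof is correct and follows essentially the route the paper itself indicates: the paper omits the proof of this lemma, referring instead to \cite[Lemma~2.4]{BR12}, and the argument there is precisely the combination you use — Green's second identity for $H^2$ functions, uniqueness of the solution of \eqref{bvp}, and the spectral theorem for the limit $\eta \searrow 0$. The only loosely argued point is the unboundedness of $M(\lambda)$, where your appeal to the pseudodifferential calculus strictly requires more boundary smoothness than the $C^2$ assumed here (an alternative within the paper's setting: if $M(\lambda)$ were bounded at one, hence by (iii) at every, point of $\rho(A)$, then for real $\lambda_0$ below $\sigma(A)$ and $\text{ess inf}\, q$ the form identity $(M(\lambda_0)g,g) = \int_\Omega |\nabla u_{\lambda_0}|^2 + (q-\lambda_0)|u_{\lambda_0}|^2$ would bound $\|u_{\lambda_0}\|_{H^1(\Omega)}$ by $\|g\|_{L^2(\partial\Omega)}$, contradicting compactness of the trace map near the compact boundary), but this does not affect the rest of your argument.
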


Observe that \eqref{Weylformula} also implies that the function $M (\cdot)$ is strongly 
analytic on $\rho (A)$. 
In the following we agree to say that the function $M (\cdot)$ can be {\it continued analytically into} $\lambda \in \R$ if and only 
if there exists an open neighborhood $\cO$ of $\lambda$ in~$\C$ such that the $L^2 (\partial \Omega)$-valued function $M (\cdot) g$ can be continued analytically to $\cO$ for all $g \in H^{3/2} (\partial \Omega)$. We say that $M (\cdot)$ has a {\it pole at} $\lambda$ if and only if there exists $g \in H^{3/2} (\partial \Omega)$ such 
that $M (\cdot) g$ has a pole at $\lambda$. The {\it residue of $M (\cdot)$ at} $\lambda$ is defined in the strong sense by
\begin{equation*}
 \left( \Res_\lambda M \right) g := \Res_\lambda (M (\cdot) g), \quad g \in H^{3/2} (\partial \Omega),
\end{equation*}
where $\Res_\lambda (M (\cdot) g)$ is the usual residue of the $L^2 (\partial \Omega)$-valued function $M (\cdot) g$ at $\lambda$.

In the next theorem we denote by \textup{s}-$\lim$ the strong limit of 
an operator-valued function. Moreover, we denote by $\sigma_{\rm p} (A)$ and $\sigma_{\rm c} (A)$ the set of eigenvalues and the 
continuous spectrum of $A$, respectively. The following theorem is the multidimensional analog of the main theorem in~\cite{CE69} and 
of~\cite[Theorem~2]{HS81}, where several ODE situations were considered; see also~\cite{T62}. The proof of item~(i) is partly inspired 
by abstract considerations in~\cite{DLS93}; the characterization of the isolated and embedded eigenvalues in the items~(ii) and~(iii) 
uses methods from the more abstract works~\cite{BLu07,L06}.

\begin{theorem}\label{thm:eigenvalues}
Let $A$ be the selfadjoint Dirichlet operator in~\eqref{Dirichlet} and let $M (\lambda)$ be the Dirichlet-to-Neumann map in~\eqref{DN}. For $\lambda \in \R$ the following assertions hold.
\begin{enumerate}
 \item $\lambda \in \rho (A)$ if and only if $M (\cdot)$ can be continued analytically into $\lambda$.
 \item $\lambda \in \sigma_{\rm p} (A)$ if and only if \textup{s}-$\lim_{\eta \searrow 0} \eta M (\lambda + i \eta) \neq 0$. 
If $\lambda$ is an eigenvalue with finite multiplicity then the mapping
 \begin{align}\label{eigenspaceFin}
  \tau : \ker (A - \lambda)  \to \Big\{ \lim_{\eta \searrow 0} \eta M (\lambda + i \eta) g : g \in H^{3/2} (\partial \Omega) \Big\}, 
\quad u \mapsto \partial_\nu u |_{\partial \Omega},
 \end{align}
 is bijective; if $\lambda$ is an eigenvalue with infinite multiplicity then the mapping
 \begin{align}\label{eigenspace}
  \tau : \ker (A - \lambda) \to \cl_\tau \Big\{ \lim_{\eta \searrow 0} \eta M (\lambda + i \eta) g : g \in H^{3/2} (\partial \Omega) \Big\}, 
\quad u \mapsto \partial_\nu u |_{\partial \Omega},
 \end{align}
 is bijective, where $\cl_\tau$ denotes the closure in the linear space $\ran \tau$, equipped with the norm in~$L^2 (\partial \Omega)$.
 \item $\lambda$ is an isolated eigenvalue of $A$ if and only if $\lambda$ is a pole of $M (\cdot)$. 
 If $\lambda$ is an eigenvalue with finite multiplicity then the mapping
 \begin{align}\label{eigenspaceIsolFin}
  \tau : \ker (A - \lambda) \to \ran \Res_\lambda M, \quad u \mapsto \partial_\nu u |_{\partial \Omega},
 \end{align}
 is bijective;  if $\lambda$ is an eigenvalue with infinite multiplicity then the mapping
 \begin{align}\label{eigenspaceIsol}
  \tau : \ker (A - \lambda) \to \cl_\tau (\ran \Res_\lambda M ), \quad u \mapsto \partial_\nu u |_{\partial \Omega},
 \end{align}
 is bijective with $\cl_\tau$ as in~\textup{(ii)}.
 \item $\lambda \in \sigma_{\rm c} (A)$ if and only if \textup{s}-$\lim_{\eta \searrow 0} \eta M (\lambda + i \eta) = 0$ and $M (\cdot)$ cannot be continued analytically into $\lambda$.
\end{enumerate}
\end{theorem}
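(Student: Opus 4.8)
The plan is to extract all the spectral information of $A$ from the single non-entire ingredient of the Weyl formula~\eqref{Weylformula}. Fix $\zeta\in\C\setminus\R$. Since $\Real M(\zeta)$, the scalar polynomial factors, and the bounded operator $\gamma(\zeta)^*\gamma(\zeta)$ are entire in the spectral parameter, \eqref{Weylformula} shows that for every $g\in H^{3/2}(\partial\Omega)$ and every real $\lambda$ the local analytic behaviour and the singularities of $M(\cdot)g$ at $\lambda$ are exactly those of the compressed resolvent $z\mapsto\gamma(\zeta)^*(A-z)^{-1}\gamma(\zeta)g$, up to the entire, nonvanishing factor $(\lambda-\zeta)(\lambda-\overline\zeta)=|\lambda-\zeta|^2$. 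I would therefore translate each assertion about $M(\cdot)$ into one about this compressed resolvent and the spectral measure $E(\cdot)$ of $A$.

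For~(i) the forward implication is immediate, since $M(\cdot)$ is strongly analytic on $\rho(A)$. For the converse, suppose $M(\cdot)g$ continues analytically into $\lambda$ for all $g$. By~\eqref{Weylformula} the $L^2(\partial\Omega)$-valued function $\gamma(\zeta)^*(A-\cdot)^{-1}\gamma(\zeta)g$ then continues analytically across a real neighbourhood $I$ of $\lambda$; pairing with $g$ shows that the scalar Borel transform
\[
z\longmapsto \big((A-z)^{-1}\gamma(\zeta)g,\gamma(\zeta)g\big)=\int\frac{d\mu_g(t)}{t-z},\qquad \mu_g(\cdot)=\|E(\cdot)\gamma(\zeta)g\|^2,
\]
continues analytically across $I$. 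Being the Borel transform of a finite positive measure, it is a Nevanlinna function; an analytic continuation across $I$ must be real there by reflection symmetry, so Stieltjes inversion forces $\mu_g(I)=0$, i.e.\ $E(I)\gamma(\zeta)g=0$. Since \eqref{Weylformula} holds for every $\zeta\in\C\setminus\R$ while the hypothesis on $M(\cdot)$ is independent of $\zeta$, I obtain $E(I)\gamma(\zeta)g=0$ for all $\zeta$ and all $g$, so $E(I)$ annihilates $\spann\{\cN_\zeta:\zeta\in\C\setminus\R\}=\spann\{\ran\gamma(\zeta)\}$. This span is dense by Proposition~\ref{simplicity}, whence $E(I)=0$ and $\lambda\in\rho(A)$. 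This converse is the crux of the theorem: the passage from analytic continuation of the compressed resolvent to vanishing of the spectral measure, together with the essential use of simplicity, is the main obstacle.

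For~(ii) I would compute the strong limit from~\eqref{Weylformula} directly. By the spectral theorem and dominated convergence, $\eta(A-\lambda-i\eta)^{-1}\to iE(\{\lambda\})$ strongly as $\eta\searrow0$, so
\[
\textup{s-}\lim_{\eta\searrow0}\eta M(\lambda+i\eta)=-i\,|\lambda-\zeta|^2\,\gamma(\zeta)^*E(\{\lambda\})\gamma(\zeta).
\]
Using $\gamma(\zeta)^*w=-(\lambda-\overline\zeta)^{-1}\partial_\nu w|_{\partial\Omega}$ for $w\in\ker(A-\lambda)$ from Lemma~\ref{lemma1}(i), the range of this limit operator is $\tau\big(\{E(\{\lambda\})\gamma(\zeta)g:g\in H^{3/2}(\partial\Omega)\}\big)$ (the nonzero scalar does not affect the range). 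Two facts complete~(ii). First, $\tau$ is injective on $\ker(A-\lambda)$: a $w\in\ker(A-\lambda)$ with $\partial_\nu w|_{\partial\Omega}=0$ has vanishing Cauchy data (recall $w|_{\partial\Omega}=0$), so $w=0$ by unique continuation. Second, $\{E(\{\lambda\})\gamma(\zeta)g:g\}$ is dense in $\ker(A-\lambda)$: if $w\in\ker(A-\lambda)$ is orthogonal to all these vectors, then $(w,\gamma(\zeta)g)=0$ for all $g$, hence $(\partial_\nu w|_{\partial\Omega},g)=0$, so $\partial_\nu w|_{\partial\Omega}=0$ and again $w=0$. Thus the strong limit is nonzero precisely when $E(\{\lambda\})\neq0$, i.e.\ when $\lambda\in\sigma_{\mathrm p}(A)$; and equipping $\ran\tau$ with the norm transported from $\ker(A-\lambda)$ makes $\tau$ an isometric bijection, giving~\eqref{eigenspaceFin} in the finite-multiplicity case and~\eqref{eigenspace} (after passing to $\cl_\tau$) in the infinite-multiplicity case.

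Statement~(iii) follows the same template. At an isolated eigenvalue the resolvent has a simple pole with residue $-E(\{\lambda\})$, so~\eqref{Weylformula} gives a simple pole of $M(\cdot)$ with $\Res_\lambda M=|\lambda-\zeta|^2\gamma(\zeta)^*E(\{\lambda\})\gamma(\zeta)$, a nonzero scalar multiple of the strong-limit operator; the injectivity and density facts above then yield~\eqref{eigenspaceIsolFin} and~\eqref{eigenspaceIsol}. Conversely a pole of $M(\cdot)$ is an isolated, non-removable singularity, so by~(i) $\lambda$ is an isolated point of $\sigma(A)$, and an isolated point of the spectrum of a selfadjoint operator is an eigenvalue. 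Finally~(iv) is immediate from~(i) and~(ii): for a selfadjoint operator $\sigma_{\mathrm c}(A)=\sigma(A)\setminus\sigma_{\mathrm p}(A)$, so $\lambda\in\sigma_{\mathrm c}(A)$ exactly when $M(\cdot)$ cannot be continued analytically into $\lambda$ (that is, $\lambda\in\sigma(A)$, by~(i)) while the strong limit vanishes (that is, $\lambda\notin\sigma_{\mathrm p}(A)$, by~(ii)).
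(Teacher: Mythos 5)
Your proof is correct, and while its overall architecture matches the paper's (everything is funneled through the Weyl formula~\eqref{Weylformula}, unique continuation gives injectivity of $\tau$, dominated convergence gives the strong limits, and (iii), (iv) are reduced to (i), (ii) in the same way), two of your key steps take a genuinely different and more economical route. For the converse in~(i), the paper works with off-diagonal compressions $(E((a,b))\gamma(\nu)g,\gamma(\zeta)h)$: it needs the three-point identity~\eqref{longFormula}, Stone's formula, and two applications of Proposition~\ref{simplicity} together with Remark~\ref{generalization}. You instead work with the diagonal scalar Borel transforms $((A-z)^{-1}\gamma(\zeta)g,\gamma(\zeta)g)$ and use Herglotz reflection symmetry plus Stieltjes inversion; since $E(I)$ is an orthogonal projection, $\mu_g(I)=\|E(I)\gamma(\zeta)g\|^2=0$ already kills the vector, so a single density application suffices. (Your realness-on-the-axis step is valid precisely because the paper's notion of ``continued analytically into $\lambda$'' demands one analytic function on a full complex neighborhood, matching $M(\cdot)g$ on \emph{both} half-planes; this is the same mechanism as the vanishing jump inside the paper's Stone integral, packaged scalarly.) For~(ii), the paper obtains density of $\spann\{P_\lambda\gamma(\nu)g:\nu\in\C\setminus\R,\,g\}$ in $\ker(A-\lambda)$ by again invoking Proposition~\ref{simplicity}; you prove density of $\{P_\lambda\gamma(\zeta)g:g\}$ for a single fixed $\zeta$ directly, observing that orthogonality of $w\in\ker(A-\lambda)$ to these vectors gives $(\partial_\nu w|_{\partial\Omega},g)=0$ for all $g$, and then reusing unique continuation. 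This removes the dependence of~(ii), and hence of~(iii), on the simplicity proposition entirely; simplicity remains essential only for~(i). That is a genuine structural gain.

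Two points should be tightened. First, the theorem's $\cl_\tau$ is the closure in $\ran\tau$ with respect to the $L^2(\partial\Omega)$ norm (the paper's proof makes this explicit), not the norm transported from $\ker(A-\lambda)$. Your transported-norm argument does produce the right set, but only after noting that $\tau=(\overline\zeta-\lambda)\,\gamma(\zeta)^*\upharpoonright\ker(A-\lambda)$ is bounded from $L^2(\Omega)$ to $L^2(\partial\Omega)$ by Lemma~\ref{lemma1}~(i), so that convergence in the transported norm implies $L^2(\partial\Omega)$-convergence; equivalently, this boundedness gives $\ran\tau\subset\overline{\cF_\lambda}$ and hence $\cl_\tau(\cF_\lambda)=\overline{\cF_\lambda}\cap\ran\tau=\ran\tau$, which is exactly the paper's chain~\eqref{ranRes}. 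Second, in~(iii) you should state explicitly that the residue $|\lambda-\zeta|^2\gamma(\zeta)^*E(\{\lambda\})\gamma(\zeta)$ is nonzero \emph{because} $\lambda\in\sigma_{\rm p}(A)$ and~(ii) applies; otherwise ``pole'' is not justified. Your converse in~(iii) also rests, exactly as the paper's does, on reading ``$M(\cdot)$ has a pole at $\lambda$'' as requiring a punctured neighborhood on which $M(\cdot)$ is strongly analytic, so this is a shared implicit convention rather than a gap of yours.
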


\begin{proof}
(i) It follows from Lemma~\ref{lemma1}~(iv) that $M (\cdot) g$ is analytic on $\rho (A)$ for each $g \in H^{3/2} (\partial \Omega)$. In order to verify the other implication, note first that the identity
\begin{align}\label{longFormula}
 \gamma(\zeta)^* (A - z)^{-1} \gamma(\nu) = \frac{M (z)}{(z - \nu) (\overline \zeta - z)} + \frac{M (\overline \zeta)}{(z - \overline \zeta) (\overline \zeta - \nu)} - \frac{M (\nu)}{(z - \nu) (\overline \zeta - \nu)}
\end{align}
holds for $\zeta, \nu, z \in \rho (A)$ satisfying $z \neq \nu, z \neq \overline \zeta$, and $\nu \neq \overline \zeta$. Indeed, Lemma~\ref{lemma1}~(ii) together with the first statement in Lemma~\ref{lemma1}~(iii) implies
\begin{align*}
	 \gamma(\zeta)^* (A - z)^{-1} \gamma(\nu) = 
	\frac{1}{z - \nu} 
	\left( \frac{M(z) - M(\overline \zeta)}{\overline \zeta - z} - \frac{M(\nu) - M(\overline \zeta)}{\overline \zeta - \nu} \right),
\end{align*}
and an easy computation yields~\eqref{longFormula}. Let us assume that $M (\cdot)$ can be continued analytically to some $\lambda \in \R$, that is, there exists an open neighborhood $\cO$ of $\lambda$ such that $M (\cdot) g$ can be continued analytically to $\cO$ for each $g \in H^{3/2} (\partial \Omega)$. Choose $a, b \notin \sigma_{\rm p} (A)$ with $\lambda \in (a, b)$ and $[a, b] \subset \cO$. The spectral projection $E ( (a, b) )$ of $A$ corresponding to the interval $(a, b)$ is given by
\begin{align}\label{stone}
 E ( (a, b) ) = \lim_{\delta \searrow 0} \frac{1}{2 \pi i} \int_a^b \left( (A - (t + i \delta) )^{-1} - (A - (t - i \delta) )^{-1} \right) d t,
\end{align}
where the integral on the right-hand side converges in the strong sense. Let us fix $\nu \in \C \setminus \R$. 
From~\eqref{longFormula} and~\eqref{stone} we obtain
\begin{align}\label{zero}
 \bigl(E ( (a,b) ) \gamma (\nu) g, \gamma (\zeta) h\bigr) = 0
\end{align}
for all $g, h \in H^{3/2} (\partial \Omega)$ and all $\zeta \in \C\setminus\R$, 
$\zeta\not=\overline\nu$, since $( M (\cdot) g, h)$ admits an analytic continuation into $\cO$ for all 
$g, h \in H^{3/2} (\partial \Omega)$, where $(\cdot, \cdot)$ is used for both the inner products in $L^2 (\Omega)$ and 
$L^2 (\partial \Omega)$. By Proposition~\ref{simplicity} and Remark~\ref{generalization}
\begin{align*}
 \spann \big\{ \gamma (\zeta) h : \zeta \in \C\setminus\R,\, \zeta\not=\overline\nu,\, h \in H^{3/2} (\partial \Omega) \big\}
\end{align*}
is dense in $L^2 (\Omega)$, thus~\eqref{zero} implies $E ( (a,b) ) \gamma (\nu) g = 0$ for all $g \in H^{3/2} (\partial \Omega)$. 
Since $\nu$ was chosen arbitrarily in $\C \setminus \R$ another application of Proposition~\ref{simplicity} 
yields $E ( (a, b) ) = 0$. This implies $\lambda \in \rho (A)$.

(ii) We prove that the mapping $\tau$ in~\eqref{eigenspace} is bijective for all $\lambda \in \R$; from this it follows immediately that $\lambda$ 
is an eigenvalue of $A$ if and only if \textup{s}-$\lim_{\eta \searrow 0} \eta M (\lambda + i \eta) \neq 0$. Let us fix $\lambda \in \R$. We prove first that the restriction $\tau$ of the trace of the normal derivative 
to $\ker (A - \lambda)$ is injective. Let $u \in \ker (A - \lambda)$ with $\partial_\nu u |_{\partial \Omega} = 0$. Then, 
denoting the extensions by zero of $u$ and $q$ to all of $\R^n$ by $\widetilde u$ and $\widetilde q$, respectively, we have $\widetilde u \in H^2 (\R^n)$ and
\begin{align*}
 \left(- \Delta + \widetilde q - \lambda \right) \widetilde u = 0.
\end{align*}
By construction $\widetilde u$ vanishes on the open, nonempty set $\R^n \setminus \overline \Omega$. Hence unique continuation implies $\widetilde u = 0$; cf.~\cite[Theorem~XIII.63]{RS78}. Thus $u = 0$ and we have proved the injectivity of $\tau$.

In order to prove the surjectivity of $\tau$ note first that for each $\zeta \in \C\setminus\R$ and each $u \in \ker (A - \lambda)$ the identity
\begin{align*}
 \tau u & = \partial_\nu u |_{\partial \Omega} = \partial_\nu \big( (A - \overline \zeta)^{-1} (A - \overline \zeta) u \big) |_{\partial \Omega} = (\lambda - \overline \zeta) \partial_\nu \big( (A - \overline \zeta)^{-1} u \big) |_{\partial \Omega} \\
 & = (\overline \zeta - \lambda) \gamma (\zeta)^* u
\end{align*}
holds by Lemma~\ref{lemma1}~(i), where $\gamma (\zeta)$ is the Poisson operator in~\eqref{Poisson}; hence, 
\begin{align}\label{taugamma}
\ran \tau = \ran \bigl(\gamma (\zeta)^* \upharpoonright \ker (A - \lambda) \bigr),\quad \zeta \in \C\setminus\R.
\end{align} 
In order to prove that $\tau$ in~\eqref{eigenspace} is surjective, we set
\begin{align*}
 \cF_\lambda := \Big\{ \lim_{\eta \searrow 0} \eta M (\lambda + i \eta) g: g \in H^{3/2} (\partial \Omega) \Big\}
\end{align*}
and show that
\begin{align}\label{ranRes}
 \cF_\lambda \subset \ran \bigl( \gamma (\zeta)^* \upharpoonright \ker (A - \lambda) \bigr) \subset \overline{\cF_\lambda},\quad \zeta \in \C\setminus\R .
\end{align}
Let us fix some $\zeta \in \C\setminus\R$. If we denote by $P_\lambda = E(\{ \lambda \})$ the orthogonal projection in $L^2 (\Omega)$ onto $\ker (A - \lambda)$ then for $\nu \in \C \setminus \R$ and $g \in H^{3/2} (\partial \Omega)$ we have
\begin{align*}
 \big\| \bigl(\eta (A - (\lambda + i \eta))^{-1} \! \!& - i P_\lambda\bigr) \gamma (\nu) g \big\|^2 \! \! \\
 & = \!
\int_\R \left| \frac{\eta}{t - \lambda - i \eta} - i \mathbbm{1}_{ \{ \lambda \} } (t) \right|^2 \! d \!\left( E (t)  \gamma (\nu) g,   \gamma (\nu) g\right)
\end{align*}
and hence the dominated convergence theorem yields
\begin{align*}
 \lim_{\eta \searrow 0} \eta  (A - (\lambda + i \eta))^{-1} \gamma (\nu) g = i P_\lambda \gamma (\nu) g.
\end{align*}
The formula~\eqref{longFormula} and the continuity of $\gamma (\zeta)^*$ imply
\begin{equation}\label{vereinfachung}
 \begin{split}
  \frac{\lim_{\eta \searrow 0} \eta M (\lambda + i \eta) g}{(\lambda - \nu) (\overline \zeta - \lambda)} & 
= \lim_{\eta \searrow 0} \eta \,\gamma (\zeta)^* (A - (\lambda + i \eta))^{-1} \gamma (\nu) g \\ &= i \gamma (\zeta)^* P_\lambda \gamma (\nu) g
 \end{split}
\end{equation}
for all $\nu \neq \overline \zeta$ and all $g \in H^{3/2} (\partial \Omega)$. Thus 
\begin{align}\label{FlambdaId}
 \cF_\lambda = \ran \big( \gamma (\zeta)^* \upharpoonright \spann \big\{ P_\lambda \gamma (\nu) g : \nu \in \C \setminus \R, \nu \neq \overline \zeta, g \in H^{3/2} (\partial \Omega) \big\} \big).
\end{align}
It follows from Proposition~\ref{simplicity} and Remark~\ref{generalization} that
\begin{align*}
 \spann \bigl\{ P_\lambda \gamma (\nu) g : \nu \in \C \setminus \R,\, \nu\not=\overline\zeta,\, g \in H^{3/2} (\partial \Omega) \bigr\}
\end{align*}
is dense in $\ker (A - \lambda)$, and, hence, from~\eqref{FlambdaId} and the continuity of $\gamma (\zeta)^*$ we obtain~\eqref{ranRes}. Furthermore, with \eqref{taugamma} we have $\cF_\lambda \subset \ran \tau\subset \overline{\cF_\lambda}$.
Since the closure $\cl_\tau (\cF_\lambda)$ of $\cF_\lambda$ in the normed space $\ran \tau$ 
(equipped with the norm of $L^2(\partial\Omega)$) coincides with the intersection of the closure $\overline \cF_\lambda$ (in $L^2(\partial\Omega)$) with 
$\ran \tau$, that is, $\cl_\tau (\cF_\lambda) = \overline \cF_\lambda \cap \ran \tau$, we conclude $\ran \tau=\cl_\tau (\cF_\lambda)$. Therefore $\tau$ is surjective and, hence, bijective. Clearly, if $\dim \ker (A - \lambda)$ is finite then equality holds in~\eqref{ranRes} which leads to the bijectivity of~\eqref{eigenspaceFin} and completes the proof of~(ii).

(iii) Let $\lambda$ be an isolated point of $\sigma (A)$. Then there exists an open neighborhood $\cO$ of $\lambda$ such that $z \mapsto (A - z)^{-1}$ is analytic on $\cO \setminus \{ \lambda \}$. Thus, by~(i), $M (\cdot)$ is analytic on $\cO \setminus \{ \lambda \}$ in the strong sense. Moreover, $\lambda \in \sigma_{\rm p} (A)$ and by~(ii) there exists $g \in H^{3/2} (\partial \Omega)$ such that $\lim_{\eta \searrow 0} i \eta M (\lambda + i \eta) g \neq 0$. Hence $\lambda$ is a pole of $M (\cdot)$ and it follows from~\eqref{Weylformula} and the corresponding property of the resolvent of $A$ that the order of the pole is one. Thus the limit 
\begin{align*}
 \lim_{z \to \lambda} (z - \lambda) M (z) g = \Res_\lambda M (\cdot) g
\end{align*}
exists for all $g \in H^{3/2} (\partial \Omega)$ and coincides with $\lim_{\eta \searrow 0} i \eta M (\lambda + i \eta) g$. Therefore~\eqref{eigenspaceIsol} is a consequence of~\eqref{eigenspace}. Analogously,~\eqref{eigenspaceIsolFin} follows from~\eqref{eigenspaceFin}. If, conversely, $\lambda$ is a pole of $M (\cdot)$ then there exists an open neighborhood $\cO$ of $\lambda$ such that $M (\cdot)$ is strongly analytic on $\cO \setminus \{ \lambda \}$ but not on $\cO$. Hence,~(i) implies $\lambda \in \sigma (A)$ and $\cO \setminus \{ \lambda \} \subset \rho (A)$; in particular, $\lambda$ is an eigenvalue of $A$. 

(iv) Since $\sigma_{\rm c} (A) = \C \setminus (\rho (A) \cup \sigma_{\rm p} (A) )$, the statement of~(iv) follows immediately from~(i) and~(ii).
\end{proof}

The next theorem shows how the absolutely continuous spectrum of the Dirichlet operator $A$ in~\eqref{Dirichlet} can be 
expressed in terms of the limits of the function $M (\cdot)$ towards real points. The result is well known in the one-dimensional 
setting for Sturm-Liouville
differential operators.
In a more abstract framework of extension theory of symmetric operators in Hilbert spaces and corresponding Weyl 
functions a similar result was proved in~\cite{BMN02}. We present a somewhat more direct proof avoiding the integral 
representation of a Nevanlinna function. We will make use of the following lemma, which can partly 
be found in, e.g., the monograph~\cite{T09}. Here, if $\mu$ is a finite Borel measure on $\R$, we denote the set of 
all growth points of $\mu$ by $\supp \mu$, that is,
\begin{align*}
 \supp \mu = \big\{ x \in \R : \mu ( (x - \eps, x + \eps) ) > 0~\text{for~all}~\eps > 0 \big\}.
\end{align*}
Moreover, for a Borel set $\chi \subset \R$ we define the {\em absolutely continuous closure} (also called {\em essential closure}) of $\chi$ by
\begin{align*}
 \clac (\chi) := \big\{ x \in \R : \left|(x - \eps, x + \eps) \cap \chi \right| > 0 ~\text{for~all}~\eps > 0 \big\},
\end{align*}
where $| \cdot |$ denotes the Lebesgue measure.

\begin{lemma}\label{mesLem}
Let $\mu$ be a finite Borel measure on $\R$ and denote by $F$ its Stieltjes transform, 
\begin{align*}
 F (\lambda) = \int_{\R} \frac{1}{t - \lambda} d \mu (t), \quad \lambda \in \C \setminus \R.
\end{align*}
Then the limit $\Imag F (x + i 0) = \lim_{y \searrow 0} \Imag F (x + i y)$ exists and is finite for Lebesgue almost all $x \in \R$. Let $\mu_{\rm ac}$ and $\mu_{\rm s}$ be the absolutely continous and singular part, respectively, of $\mu$ in the Lebesgue decomposition $\mu = \mu_{\rm ac} + \mu_{\rm s}$, and decompose $\mu_{\rm s}$ into the singular continuous part $\mu_{\rm sc}$ and the pure point part. Then the following assertions hold.
\begin{enumerate}
 \item $\supp \mu_{\rm ac} = \clac ( \{ x \in \R : 0 < \Imag F (x + i 0) < +\infty \} )$.
 \item The set $M_{\rm sc} = \{ x \in \R : \Imag F (x + i 0) = + \infty, \lim_{y \searrow 0} y F ( x + i y) = 0 \}$ is a support for $\mu_{\rm sc}$, that is, $\mu_{\rm sc} ( \R \setminus M_{\rm sc}) = 0$.
\end{enumerate}
\end{lemma}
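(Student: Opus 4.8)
The plan is to exploit that for $\lambda = x + i y$ with $y > 0$ the imaginary part of the Borel transform is exactly the Poisson integral of $\mu$,
\[
 \Imag F (x + i y) = \int_\R \frac{y}{(t - x)^2 + y^2} \, d \mu (t),
\]
so that its radial boundary behaviour is governed by classical Fatou-type theory. I would take two facts from this theory (which, as noted, can be found in part in~\cite{T09}): first, that the limit $\Imag F (x + i 0) := \lim_{y \searrow 0} \Imag F (x + i y)$ exists, is finite, and equals $\pi w (x)$ for Lebesgue-a.e.\ $x$, where $w = d \mu_{\rm ac} / d x$ is the Radon--Nikodym density of the absolutely continuous part; and second, that the singular part $\mu_{\rm s}$ is concentrated on the set $S := \{ x \in \R : \lim_{y \searrow 0} \Imag F (x + i y) = + \infty \}$, that is, $\mu_{\rm s} (\R \setminus S) = 0$.

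For~(i) I would argue as follows. Since $\Imag F (x + i 0) = \pi w (x)$ for a.e.\ $x$, the sets $\{ x : 0 < \Imag F (x + i 0) < + \infty \}$ and $\{ x : w (x) > 0 \}$ coincide up to a Lebesgue-null set. The absolutely continuous closure $\clac$ of a Borel set depends only on the Lebesgue measures $|(x - \eps, x + \eps) \cap \chi|$, hence is unaffected by modifications on null sets, so it suffices to identify $\supp \mu_{\rm ac} = \clac ( \{ w > 0 \} )$. This is immediate from the definitions: for each $x$ and $\eps > 0$ one has $\mu_{\rm ac} ((x - \eps, x + \eps)) = \int_{x - \eps}^{x + \eps} w \, d t$, which is positive precisely when $|(x - \eps, x + \eps) \cap \{ w > 0 \}| > 0$; thus $x$ is a growth point of $\mu_{\rm ac}$ if and only if $x \in \clac ( \{ w > 0 \} )$.

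For~(ii) I would first record the elementary identity
\[
 y F (x + i y) = \int_\R \frac{y (t - x)}{(t - x)^2 + y^2} \, d \mu (t) + i \int_\R \frac{y^2}{(t - x)^2 + y^2} \, d \mu (t),
\]
whose integrands are bounded by $\tfrac12$ and $1$ and converge pointwise as $y \searrow 0$ to $0$ and to $\mathbbm{1}_{\{ x \}} (t)$, respectively. Since $\mu$ is finite, dominated convergence yields $\lim_{y \searrow 0} y F (x + i y) = i \mu ( \{ x \} )$ for \emph{every} $x \in \R$. Consequently the condition $\lim_{y \searrow 0} y F (x + i y) = 0$ in the definition of $M_{\rm sc}$ selects exactly the non-atoms of $\mu$, so that $M_{\rm sc} = S \setminus \{ x : \mu (\{ x \}) > 0 \}$ and $\R \setminus M_{\rm sc} = (\R \setminus S) \cup \{ x : \mu (\{ x \}) > 0 \}$. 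The first set is $\mu_{\rm s}$-null by the second cited fact, hence $\mu_{\rm sc}$-null because $\mu_{\rm sc} \leq \mu_{\rm s}$; the second set is the at most countable set of atoms of $\mu$ and is $\mu_{\rm sc}$-null because $\mu_{\rm sc}$ is a continuous measure. Adding these gives $\mu_{\rm sc} (\R \setminus M_{\rm sc}) = 0$, which is the claim.

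The main obstacle is not in the bookkeeping above but in the two boundary-value inputs for $\Imag F$. The first is standard Fatou theory for Poisson integrals, while the second---that the singular mass is carried exactly by the set where the Poisson integral diverges to $+ \infty$---is the genuinely nontrivial point, and is where I would rely on~\cite{T09} (equivalently, on a de la Vall\'ee-Poussin-type comparison between $\Imag F (x + i y)$ and the symmetric derivative of $\mu$). Everything else reduces to the routine dominated-convergence computation and the invariance of $\clac$ under null-set modifications.
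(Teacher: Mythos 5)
Your proposal is correct and follows essentially the same route as the paper: both parts rest on the classical Fatou/de la Vall\'ee-Poussin boundary theory for the Poisson integral $\Imag F$ (cited from~\cite{T09}), the dominated-convergence identity $\lim_{y \searrow 0} y F(x+iy) = i\mu(\{x\})$, and the fact that $\mu_{\rm sc}$ has no atoms. The only cosmetic difference is in~(i), where the paper cites \cite[Lemma~3.15]{T09} verbatim while you rederive it from the a.e.\ identity $\Imag F(x+i0) = \pi w(x)$ together with the null-set invariance of $\clac$.
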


\begin{proof}
The assertion on the existence of the limit $\Imag F (x + i 0)$ and item~(i) can be found in~\cite[Lemma~3.15 and Theorem~3.23]{T09}. In order to verify item~(ii) let us set
\begin{align*}
 (D \mu) (x) = \lim_{\eps \searrow 0} \frac{\mu ( (x - \eps, x + \eps) )}{2 \eps}
\end{align*}
for all $x \in \R$ such that the limit exists (finite or infinite). By~\cite[Theorem~A.38]{T09} the set 
$\left\{ x \in \R : (D \mu) (x) = +\infty \right\}$ is a support for $\mu_{\rm s}$ and $(D \mu) (x) = +\infty$ implies $\Imag F (x + i 0) = + \infty$, see~\cite[Theorem~3.23]{T09}. Consequently, also 
\begin{align*}
 \left\{ x \in \R : \Imag F (x + i 0) = +\infty \right\}
\end{align*}
is a support for $\mu_{\rm s}$. Moreover, note that $i \mu (\{x\} ) = \lim_{y \searrow 0} y F (x + i y)$ holds for all $x \in \R$; indeed, 
\begin{align*}
 \big| y F (x + i y) - i \mu ( \{x\} ) \big| \leq \int_{\R} \left| \frac{y}{t - (x + i y)} - i \mathbbm{1}_{\{x\}} (t) \right| d \mu (t) \to 0, \quad y \searrow 0,
\end{align*}
by the dominated convergence theorem. In particular, $\mu (\{x\}) = 0$ if and only if $\lim_{y \searrow 0} y F (x + i y) = 0$. Thus the claim of item~(ii) follows.
\end{proof}

Now the absolutely continuous spectrum of $A$ can be characterized in the same form as for ordinary differential operators.

\begin{theorem}\label{ACtheorem}
Let $A$ be the selfadjoint Dirichlet operator in~\eqref{Dirichlet} and let $M (\lambda)$ be the Dirichlet-to-Neumann map in~\eqref{DN}. Then the absolutely continuous spectrum of $A$ is given by
 \begin{align}\label{ACidentity}
  \sigma_{\rm ac} (A) = \overline{ \bigcup_{g \in H^{3/2} (\partial \Omega)} \clac \big( \big\{x \in \R : 0 < - \Imag (M (x + i 0) g, g) < 
+\infty \big\} \big) }.
 \end{align}
 In particular, if $a < b$ then $(a, b) \cap \sigma_{\rm ac} (A) = \emptyset$ if and only if for each $g \in H^{3/2} (\partial \Omega)$ one has $\Imag (M (x + i 0) g, g) = 0$ for almost all $x \in (a, b)$.
\end{theorem}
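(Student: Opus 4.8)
The plan is to reduce the operator identity~\eqref{ACidentity} to a family of scalar Herglotz functions and then to invoke Lemma~\ref{mesLem}~(i). Fix $g \in H^{3/2}(\partial\Omega)$ and $\zeta \in \C\setminus\R$, write $\phi := \gamma(\zeta)g \in L^2(\Omega)$, and let $\mu_\phi := (E(\cdot)\phi,\phi)$ be the associated scalar spectral measure of $A$, with Borel transform $F_\phi(\lambda) = ((A-\lambda)^{-1}\phi,\phi) = \int_\R (t-\lambda)^{-1}\,d\mu_\phi(t)$. Pairing the Weyl formula~\eqref{Weylformula} with $g$ gives, for $\lambda = x+iy \in \C\setminus\R$,
\[
 (M(\lambda)g,g) = (\Real M(\zeta)g,g) - (\lambda-\Real\zeta)\|\phi\|^2 - (\lambda-\zeta)(\lambda-\overline\zeta)F_\phi(\lambda),
\]
where $(\Real M(\zeta)g,g)\in\R$. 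Moreover, Lemma~\ref{lemma1}~(iii) with $\zeta=\lambda$ yields $-\Imag(M(\lambda)g,g) = -\Imag\lambda\,\|\gamma(\lambda)g\|^2\ge 0$ for $\Imag\lambda>0$, so $-(M(\cdot)g,g)$ is a scalar Herglotz function and the boundary value $(M(x+i0)g,g)$ in the theorem exists and is finite for almost every $x$. Taking imaginary parts above and writing $P(\lambda) := (\lambda-\zeta)(\lambda-\overline\zeta)$, a polynomial with real coefficients satisfying $P(x+i0) = |x-\zeta|^2 > 0$ and $\Imag P(x+i0)=0$, I obtain
\[
 -\Imag(M(\lambda)g,g) = y\,\|\phi\|^2 + \Imag\bigl(P(\lambda)F_\phi(\lambda)\bigr).
\]

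First I would let $y\searrow 0$. Since $F_\phi$ is Herglotz, its boundary value $F_\phi(x+i0)$ exists and is finite for almost every $x$; at every such $x$ the term $y\|\phi\|^2$ vanishes and the cross term $\Imag P(x+iy)\,\Real F_\phi(x+iy)=2y(x-\Real\zeta)\,\Real F_\phi(x+iy)$ tends to $0$, so that
\[
 -\Imag(M(x+i0)g,g) = |x-\zeta|^2\,\Imag F_\phi(x+i0)\qquad\text{for a.e. } x\in\R.
\]
As $|x-\zeta|^2>0$, the sets $\{x:0<-\Imag(M(x+i0)g,g)<+\infty\}$ and $\{x:0<\Imag F_\phi(x+i0)<+\infty\}$ agree up to a Lebesgue null set, hence share the same absolutely continuous closure; by Lemma~\ref{mesLem}~(i) this common set is $\supp(\mu_\phi)_{\rm ac}$, so
\[
 \clac\bigl(\{x:0<-\Imag(M(x+i0)g,g)<+\infty\}\bigr) = \supp\bigl(\mu_{\gamma(\zeta)g}\bigr)_{\rm ac}.
\]
Crucially the left-hand side is independent of $\zeta$, so this identity holds simultaneously for every $\zeta\in\C\setminus\R$.

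It then remains to combine these supports. I would use the standard fact that for any family $\{\psi_\alpha\}$ with dense span in $L^2(\Omega)$ one has $\sigma_{\rm ac}(A) = \overline{\bigcup_\alpha \supp(\mu_{\psi_\alpha})_{\rm ac}}$: indeed $x\in\sigma_{\rm ac}(A)$ iff $E_{\rm ac}(U)\neq 0$ for every neighbourhood $U$ of $x$, which by density is equivalent to $(\mu_{\psi_\alpha})_{\rm ac}(U)>0$ for some $\alpha$. Applying this to the total family $\{\gamma(\zeta)g : \zeta\in\C\setminus\R,\ g\in H^{3/2}(\partial\Omega)\}$ from Proposition~\ref{simplicity} and Remark~\ref{generalization}, inserting the identity above and discarding the now-redundant union over $\zeta$, yields exactly~\eqref{ACidentity}. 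For the supplementary claim I would note that $(a,b)\cap\sigma_{\rm ac}(A)=\emptyset$ is equivalent to $E_{\rm ac}((a,b))=0$, hence by totality to $(\mu_{\gamma(\zeta)g})_{\rm ac}((a,b))=0$ for all $\zeta,g$; since the a.c.\ density of $\mu_\phi$ equals $\pi^{-1}\Imag F_\phi(x+i0)\ge 0$, this means $\Imag F_{\gamma(\zeta)g}(x+i0)=0$ for a.e.\ $x\in(a,b)$ and all $\zeta,g$, which by the displayed identity is the same as $\Imag(M(x+i0)g,g)=0$ for a.e.\ $x\in(a,b)$ and all $g$.

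The step I expect to be most delicate is the almost-everywhere boundary passage in the second paragraph: one must control $\Imag(P(\lambda)F_\phi(\lambda))$ as $y\searrow 0$ even though $\Real F_\phi$ may be large, and this is handled precisely by Fatou's theorem, which supplies finite nontangential limits of the Herglotz function $F_\phi$ almost everywhere, together with $\Imag P(x+iy)\to 0$. A secondary point requiring care is the a.c.-spectrum characterization for total families invoked in the last paragraph; although classical, it should be stated precisely so that the outer closure in~\eqref{ACidentity} is matched correctly and the passage between growth points of $(\mu_{\gamma(\zeta)g})_{\rm ac}$ and the essential closure is clean.
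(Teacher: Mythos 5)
Your proposal is correct and takes essentially the same route as the paper: both pair the Weyl formula~\eqref{Weylformula} with $g$ to obtain $-\Imag (M(x+i0)g,g)=|x-\zeta|^2\,\Imag F_{\gamma(\zeta)g}(x+i0)$, apply Lemma~\ref{mesLem}~(i) to the spectral measures $\mu_{\gamma(\zeta)g}$, and conclude via the density statement of Proposition~\ref{simplicity}; your minor variations (Fatou's theorem yielding the boundary identity only a.e.\ instead of the paper's dominated-convergence argument valid for every $x$, and the a.c.\ density formula $\pi^{-1}\Imag F(x+i0)$ for the supplementary claim instead of the paper's essential-closure and null-support argument) are harmless. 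One sign slip to fix: Lemma~\ref{lemma1}~(iii) with $\zeta=\lambda$ gives $-\Imag(M(\lambda)g,g)=+\Imag\lambda\,\|\gamma(\lambda)g\|^2\ge 0$ for $\Imag\lambda>0$, not $-\Imag\lambda\,\|\gamma(\lambda)g\|^2$.
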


\begin{proof}
Let us set
\begin{align}\label{D}
 \cD := \left\{ \gamma (\zeta) g : g \in H^{3/2} (\partial \Omega), \zeta \in \C \setminus \R \right\} = 
\bigcup_{\zeta \in \C \setminus \R} \cN_\zeta,
\end{align}
where $\cN_\zeta$ is defined in~\eqref{Nlambda}. By Proposition~\ref{simplicity} $\spann \cD$ is dense in $L^2 (\Omega)$. We claim that 
the absolutely continuous spectrum of $A$ is given by 
\begin{align}\label{acid1}
 \sigma_{\rm ac} (A) = \overline{\bigcup_{u \in L^2(\Omega)} \supp \mu_{u, \rm ac}} = 
\overline{\bigcup_{\gamma (\zeta) g \in \cD} \supp \mu_{\gamma (\zeta) g, \rm ac}},
\end{align}
where $\mu_u := (E (\cdot) u, u)$ for $u \in L^2 (\Omega)$ and $E (\cdot)$ is the spectral measure of $A$. In fact, if $P_{\rm ac}$ denotes the orthogonal
projection onto the absolutely continuous subspace of $A$ then the absolutely continuous measures $\mu_{u, \rm ac}$ are given by
$$\mu_{u, \rm ac}=(E (\cdot) P_{\rm ac} u, P_{\rm ac} u)=\mu_{P_{\rm ac}u}.$$ 
Therefore, if $x\not\in\sigma_{\rm ac}(A)$ there exists $\varepsilon>0$ such that 
$E((x-\varepsilon,x+\varepsilon))P_{\rm ac}=0$ and hence $\mu_{u, \rm ac}((x-\varepsilon,x+\varepsilon))=0$ for all $u \in L^2(\Omega)$. This shows 
$(x-\varepsilon,x+\varepsilon)\cap\supp\mu_{u,{\rm ac}}=\emptyset$ for all $u\in L^2(\Omega)$ and hence
$$
x\not\in\overline{\bigcup_{u \in L^2(\Omega)} \supp \mu_{u, \rm ac}}.
$$
This yields the inclusions
\begin{equation*}
 \overline{\bigcup_{\gamma (\zeta) g \in \cD} \supp \mu_{\gamma (\zeta) g, \rm ac}}\subset
 \overline{\bigcup_{u \in L^2(\Omega)} \supp \mu_{u, \rm ac}}\subset \sigma_{\rm ac} (A).
\end{equation*}
Conversely, if $x$ does not belong to the right hand side of \eqref{acid1} then there exists $\varepsilon>0$ such that 
$(x-\varepsilon,x+\varepsilon)\subset \dR\setminus\supp\mu_{\gamma (\zeta) g, \rm ac}$ for all $\gamma (\zeta) g\in\cD$.
Thus 
\begin{equation*}
\Vert E((x-\varepsilon,x+\varepsilon))P_{\rm ac}\gamma (\zeta) g\Vert^2=\mu_{\gamma (\zeta) g, \rm ac}((x-\varepsilon,x+\varepsilon))=0
\end{equation*}
for all $\gamma (\zeta) g\in\cD$. Since $\spann \cD$ is dense in $L^2 (\Omega)$ by Proposition~\ref{simplicity} it follows that
$ E((x-\varepsilon,x+\varepsilon))P_{\rm ac} u=0$ holds for all $u\in L^2(\Omega)$, and hence $x\not\in\sigma_{\rm ac}(A)$. We have verified 
the identity \eqref{acid1}.

With the help of the 
formula~\eqref{Weylformula} we compute
\begin{align}\label{kette}
 \Imag (M & (x + i y) g, g) \nonumber \\
 & = - y \| \gamma (\zeta) g \|^2 - \left( |x - \zeta|^2 - y^2 \right) \Imag \left( (A - (x + i y) )^{-1} \gamma (\zeta) g, \gamma (\zeta) g \right) \nonumber \\
 & \quad  - 2 (x - \Real \zeta) y \Real \left( (A - (x + i y) )^{-1} \gamma (\zeta) g, \gamma (\zeta) g \right), 
\end{align}
for all $x \in \R$, $y>0$, $g \in H^{3/2} (\partial \Omega)$ and $\zeta \in \C \setminus \R$. Moreover,
\begin{align*}
 y \Real \left( (A - (x + i y) )^{-1} \gamma (\zeta) g, \gamma (\zeta) g \right) = \int_\R \frac{y (t - x)}{(t - x)^2 + y^2} 
d ( E (t) \gamma (\zeta) g, \gamma (\zeta) g )
\end{align*}
converges to zero as $y \searrow 0$ by the dominated convergence theorem. Therefore~\eqref{kette} implies
\begin{align}\label{MResId}
 \Imag ( M (x + i 0) g, g) = - |x - \zeta|^2 \Imag \left( (A - (x + i 0) )^{-1} \gamma (\zeta) g, \gamma (\zeta) g \right),
\end{align}
in particular, 
\begin{align}\label{ImagEqual}
 & \bigl\{ x \in \R : 0 < - \Imag (M (x + i 0) g, g) < + \infty \bigr\} \nonumber \\
 & \qquad \qquad \qquad = \bigl\{ x \in \R : 0 < \Imag \left((A - (x + i 0) )^{-1} \gamma (\zeta) g, \gamma (\zeta) g \right) < +\infty \bigr\}
\end{align}
holds for all $g \in H^{3/2} (\partial \Omega)$ and all $\zeta \in \C \setminus \R$. Note that the Stieltjes transform of 
the measure $\mu_{\gamma (\zeta) g}=(E(\cdot)\gamma (\zeta) g,\gamma (\zeta) g)$ is given by
\begin{align}\label{stern}
 F_{\gamma (\zeta) g} (x + i y) & = \int_\R \frac{1}{t - (x + i y)} d (E (t) \gamma (\zeta) g, \gamma (\zeta) g ) \nonumber \\
 & = \left( (A - (x + i y) )^{-1} \gamma (\zeta) g, \gamma (\zeta) g \right), \qquad x \in \R,\, y > 0.
\end{align}
Hence Lemma~\ref{mesLem}~(i) implies 
\begin{align*}
\supp \mu_{\gamma (\zeta) g, {\rm ac}} & = \clac \bigl( \bigl\{ x \in \R : 0 < \Imag F_{\gamma (\zeta) g} (x + i 0) < +\infty \bigr\} \bigr) \\
  &= \clac \bigl( \bigl\{ x \in \R : 0 < \Imag \left((A - (x + i 0) )^{-1} \gamma (\zeta) g, \gamma (\zeta) g \right) < +\infty \bigr\} \bigr)
 \end{align*}
and with the help of \eqref{ImagEqual} we conclude
\begin{align*}
\supp \mu_{\gamma (\zeta) g, {\rm ac}}  = \clac \bigl( \bigl\{ x \in \R : 0 < - \Imag \bigl(M (x + i 0) g, g \bigr) < +\infty \bigr\} \bigr).
 \end{align*}
Now the assertion \eqref{ACidentity} follows from \eqref{acid1}.

It remains to show that  $(a, b) \cap \sigma_{\rm ac} (A) = \emptyset$ if and only if for each $g \in H^{3/2} (\partial \Omega)$
one has $\Imag (M (x + i 0) g, g) = 0$ for almost all $x \in (a, b)$.
For abbreviation set
\begin{align*}
 M_{\rm ac} (g) := \bigl\{ x \in \R : 0 < - \Imag ( M (x + i 0) g, g)  < + \infty \bigr\}, \quad g \in H^{3/2} (\partial \Omega).
\end{align*}
If $(a, b) \cap \sigma_{\rm ac} (A) = \emptyset$ then $\emptyset  = \clac \big( M_{\rm ac} (g) \big) \cap (a, b)$ by~\eqref{ACidentity} for each $g \in H^{3/2} (\partial \Omega)$. Therefore, for each $g$ and each $x \in (a, b)$ there exists $\eps > 0$ such that 
\begin{align}\label{zeroset}
| (x - \eps, x + \eps) \cap M_{\rm ac} (g) | = 0.
\end{align}
It follows from~\eqref{MResId} and Lemma~\ref{mesLem} that $\Imag (M (x + i 0) g, g)$ exists and is finite for Lebesgue almost all $x \in \R$ and all $g \in H^{3/2} (\partial \Omega)$. Hence~\eqref{zeroset} implies $\Imag (M (x + i 0) g, g) = 0$ for all $g \in H^{3/2} (\partial \Omega)$ and almost all $x \in (a, b)$. The converse implication follows immediately from~\eqref{ACidentity}, since the absolutely continuous closure of a set of Lebesgue measure zero is empty.
\end{proof}

Next we formulate a sufficient criterion for the absence of singular continuous spectrum within some interval in 
terms of the limiting behaviour of the function~$M (\cdot)$. Again the one-dimensional counterpart for Sturm-Liouville operators is well known;
an abstract operator theoretic version is contained in \cite{BMN02}.

\begin{theorem}\label{SCtheorem}
Let $A$ be the selfadjoint Dirichlet operator in~\eqref{Dirichlet}, let $M (\lambda)$ be the Dirichlet-to-Neumann map in~\eqref{DN}, and let $a < b$. If for each $g \in H^{3/2} (\partial \Omega)$ there exist at most countably many $x \in (a, b)$ such that
 \begin{align}\label{SCspec}
  \Imag (M (x + i y) g, g) \to - \infty \quad \text{and} \quad y (M (x + i y) g, g) \to 0 \quad \text{as} \quad y \searrow 0
 \end{align}
 then $(a, b) \cap \sigma_{\rm sc} (A) = \emptyset$. 
\end{theorem}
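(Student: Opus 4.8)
The plan is to reduce the statement to the scalar measures $\mu_{\gamma (\zeta) g}=(E(\cdot)\gamma(\zeta)g,\gamma(\zeta)g)$ already used in the proof of Theorem~\ref{ACtheorem} and to apply part~(ii) of Lemma~\ref{mesLem} to their Borel transforms $F_{\gamma(\zeta)g}$ from~\eqref{stern}. First I would fix $\zeta\in\C\setminus\R$ and $g\in H^{3/2}(\partial\Omega)$ and translate the two conditions in~\eqref{SCspec} into conditions on $F_{\gamma(\zeta)g}$. For the first condition the relation~\eqref{kette}, together with the facts (established in the proof of Theorem~\ref{ACtheorem}) that $y\,\Real F_{\gamma(\zeta)g}(x+iy)\to0$ and, likewise, $y^2\,\Imag F_{\gamma(\zeta)g}(x+iy)\to0$ as $y\searrow0$, shows that $\Imag(M(x+iy)g,g)\to-\infty$ is equivalent to $\Imag F_{\gamma(\zeta)g}(x+i0)=+\infty$. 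For the second condition I would multiply the Weyl formula~\eqref{Weylformula} by $y$; since $y(\Real M(\zeta)g,g)\to0$, $\,y(x+iy-\Real\zeta)\to0$ and $(x+iy-\zeta)(x+iy-\overline\zeta)\to|x-\zeta|^2$, one obtains
\begin{align*}
 \lim_{y\searrow0} y(M(x+iy)g,g) = -|x-\zeta|^2\lim_{y\searrow0} y F_{\gamma(\zeta)g}(x+iy),
\end{align*}
so that $y(M(x+iy)g,g)\to0$ is equivalent to $\lim_{y\searrow0}yF_{\gamma(\zeta)g}(x+iy)=0$. Consequently the set of $x\in(a,b)$ for which~\eqref{SCspec} holds coincides, \emph{for every} $\zeta\in\C\setminus\R$, with $M_{\rm sc}(\gamma(\zeta)g)\cap(a,b)$, where $M_{\rm sc}(\gamma(\zeta)g)$ is the support set from Lemma~\ref{mesLem}~(ii) associated with $F_{\gamma(\zeta)g}$.

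Next I would invoke the hypothesis. By assumption this common set is at most countable for each $g$, and hence it is a $\mu_{\gamma(\zeta)g,\rm sc}$-null set, since the singular continuous part of a measure has no atoms. As $M_{\rm sc}(\gamma(\zeta)g)$ is a support for $\mu_{\gamma(\zeta)g,\rm sc}$ by Lemma~\ref{mesLem}~(ii), this yields
\begin{align*}
 \mu_{\gamma(\zeta)g,\rm sc}((a,b)) = \mu_{\gamma(\zeta)g,\rm sc}\big(M_{\rm sc}(\gamma(\zeta)g)\cap(a,b)\big) = 0
\end{align*}
for all $\zeta\in\C\setminus\R$ and all $g\in H^{3/2}(\partial\Omega)$.

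Finally I would pass from these scalar statements to the operator statement $(a,b)\cap\sigma_{\rm sc}(A)=\emptyset$ by a density argument. Let $P_{\rm sc}$ denote the orthogonal projection onto the singular continuous subspace of $A$; since $P_{\rm sc}$ commutes with $E(\cdot)$ and coincides with a vector's singular continuous component, one has $\mu_{u,\rm sc}(\delta)=\|E(\delta)P_{\rm sc}u\|^2$ for every $u\in L^2(\Omega)$ and every Borel set $\delta$. Thus the previous step gives $E((a,b))P_{\rm sc}\gamma(\zeta)g=0$ for every $\gamma(\zeta)g$ in the set $\cD$ of~\eqref{D}. Since $\spann\cD$ is dense in $L^2(\Omega)$ by Proposition~\ref{simplicity} and $E((a,b))P_{\rm sc}$ is bounded, it follows that $E((a,b))P_{\rm sc}=0$, which is precisely $(a,b)\cap\sigma_{\rm sc}(A)=\emptyset$. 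The main technical point is the first step, namely the careful identification of~\eqref{SCspec} with the support set $M_{\rm sc}(\gamma(\zeta)g)$ uniformly in $\zeta$; once this $\zeta$-independence is secured, the measure-theoretic and density arguments are routine and run parallel to those in the proof of Theorem~\ref{ACtheorem}.
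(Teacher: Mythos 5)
Your proposal is correct and follows essentially the same route as the paper: translate~\eqref{SCspec} via~\eqref{kette}/\eqref{Weylformula} into the conditions $\Imag F_{\gamma(\zeta)g}(x+i0)=+\infty$ and $\lim_{y\searrow 0}yF_{\gamma(\zeta)g}(x+iy)=0$ on the Borel transforms~\eqref{stern}, apply Lemma~\ref{mesLem}~(ii) together with the atomlessness of singular continuous measures to get $\mu_{\gamma(\zeta)g,\rm sc}$-nullity of $(a,b)$, and conclude by the density of $\spann\cD$ from Proposition~\ref{simplicity}. The only cosmetic difference is that you make the last step explicit with the projection $P_{\rm sc}$ and a boundedness/density argument, whereas the paper packages the same reasoning into the support formula~\eqref{scid1}.
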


\begin{proof}
As in the proof of Theorem~\ref{ACtheorem} one verifies the identity
\begin{align}\label{scid1}
 \sigma_{\rm sc} (A) = \overline{\bigcup_{\gamma (\zeta) g \in \cD} \supp \mu_{\gamma (\zeta) g, \rm sc}}
\end{align}
with $\cD$ defined in~\eqref{D} and $\mu_{\gamma (\zeta) g} = (E (\cdot) \gamma (\zeta) g, \gamma (\zeta) g)$. From~\eqref{SCspec} it follows with the help of~\eqref{vereinfachung} and~\eqref{MResId} that for each $g \in H^{3/2} (\partial \Omega)$ and each $\zeta \in \C \setminus \R$ there exist at most countably many $x \in (a, b)$ such that
\begin{align}\label{sEigensch}
 \Imag \left( (A - (x + i y) )^{-1} \gamma (\zeta) g, \gamma (\zeta) g \right) \to + \infty
\end{align}
and
\begin{align}\label{notppEigensch}
 y \left( (A - (x + i y) )^{-1} \gamma (\zeta) g, \gamma (\zeta) g \right) \to 0
\end{align}
as $y \searrow 0$. By Lemma~\ref{mesLem}~(ii) and~\eqref{stern} the set of those $x$ satisfying~\eqref{sEigensch} and~\eqref{notppEigensch} 
forms a support of $\mu_{\gamma (\zeta) g, \rm sc}$. It follows that $\mu_{\gamma (\zeta) g, \rm sc}$ has a countable support in $(a, b)$ for each $\gamma (\zeta) g \in \cD$. 
Since the measures $\mu_{\gamma (\zeta) g, \rm sc}$ do not have point masses, we have $(a, b) \cap \supp \mu_{\gamma (\zeta) g, \rm sc} = \emptyset$ 
for all $\gamma (\zeta) g \in \cD$ and, hence,~\eqref{scid1} yields $\sigma_{\rm sc} (A) \cap (a, b) = \emptyset$.
\end{proof}

As a corollary of the theorems of this section we provide sufficient criteria for the spectrum of the Dirichlet operator $A$ to 
be purely absolutely continuous or purely singularly continuous, respectively, in some interval.

\begin{corollary}
Let $A$ be the selfadjoint Dirichlet operator in~\eqref{Dirichlet}, let $M (\lambda)$ be the Dirichlet-to-Neumann map in~\eqref{DN}, and let $a < b$. Moreover, for all $x \in (a, b)$ let
\begin{align*}
 \textup{s-}\hspace{-1mm}\lim_{y \searrow 0} y M (x + i y) = 0.
\end{align*}
Then the following assertions hold.
\begin{enumerate}
\item If for each $g \in H^{3/2} (\partial \Omega)$ there exist at most countably many $x \in (a, b)$ such that 
$\Imag (M (x + i 0) g, g) = - \infty$ then $\sigma (A) \cap (a, b) = \sigma_{\rm ac} (A) \cap (a, b)$.
	\item If for each $g \in H^{3/2} (\partial \Omega)$ one has $\Imag ( M (x + i 0) g, g) = 0$ for almost all $x \in (a, b)$ then $\sigma (A) \cap (a, b) = \sigma_{\rm sc} (A) \cap (a, b)$.
	
\end{enumerate}
\end{corollary}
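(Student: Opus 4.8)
The plan is to derive both statements as direct consequences of Theorem~\ref{ACtheorem} and Theorem~\ref{SCtheorem} together with the standing hypothesis that the strong limit $\textup{s-}\lim_{y\searrow 0}yM(x+iy)=0$ holds for every $x\in(a,b)$. The overall strategy is to combine three facts about the restriction of the spectral measure to $(a,b)$: first, that the standing hypothesis forces the absence of point masses (hence no eigenvalues) in $(a,b)$; second, that Theorem~\ref{SCtheorem} governs the singular continuous part; and third, that Theorem~\ref{ACtheorem} governs the absolutely continuous part. Since $\sigma(A)\cap(a,b)$ decomposes as the union of its point, singular continuous, and absolutely continuous parts, ruling out two of the three in each item yields the claimed identity.

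For item~(i), I would first observe that the standing hypothesis $\textup{s-}\lim_{y\searrow 0}yM(x+iy)=0$ gives, upon pairing with $g$, that $y(M(x+iy)g,g)\to 0$ for every $x\in(a,b)$ and every $g\in H^{3/2}(\partial\Omega)$. Consequently the countability hypothesis of item~(i), namely that for each $g$ only countably many $x\in(a,b)$ satisfy $\Imag(M(x+i0)g,g)=-\infty$, supplies exactly the two conditions in \eqref{SCspec} of Theorem~\ref{SCtheorem} at those points; so Theorem~\ref{SCtheorem} applies and yields $(a,b)\cap\sigma_{\rm sc}(A)=\emptyset$. Next I would use Theorem~\ref{thm:eigenvalues}~(ii): the standing hypothesis says $\textup{s-}\lim_{y\searrow 0}yM(x+iy)=0$, which is precisely the negation of the eigenvalue criterion, so no $x\in(a,b)$ is an eigenvalue and $(a,b)\cap\sigma_{\rm p}(A)=\emptyset$. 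Since the spectrum in $(a,b)$ is the disjoint union of its pure point, singular continuous, and absolutely continuous constituents, and the first two are now empty, we obtain $\sigma(A)\cap(a,b)=\sigma_{\rm ac}(A)\cap(a,b)$.

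For item~(ii), the additional hypothesis is $\Imag(M(x+i0)g,g)=0$ for all $g$ and almost all $x\in(a,b)$. By the second assertion of Theorem~\ref{ACtheorem}, this is equivalent to $(a,b)\cap\sigma_{\rm ac}(A)=\emptyset$. As in item~(i), the standing hypothesis together with Theorem~\ref{thm:eigenvalues}~(ii) removes all eigenvalues from $(a,b)$, so $(a,b)\cap\sigma_{\rm p}(A)=\emptyset$ as well. Again invoking the decomposition of the spectrum into its three parts, with the absolutely continuous and pure point parts empty on $(a,b)$, we are left with $\sigma(A)\cap(a,b)=\sigma_{\rm sc}(A)\cap(a,b)$.

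I expect the main conceptual point, rather than a genuine obstacle, to be the bookkeeping that reconciles the two slightly different formulations of the eigenvalue-removing step: the standing hypothesis is phrased as a strong operator limit, whereas Theorem~\ref{thm:eigenvalues}~(ii) is stated as a non-triviality criterion for that same strong limit, so one must note carefully that the vanishing strong limit means the eigenvalue criterion fails at every point of $(a,b)$. A secondary point worth stating explicitly is the harmless measure-theoretic fact that $\sigma(A)\cap(a,b)$ really does split as the union of the point, singular continuous, and absolutely continuous spectra restricted to the interval; this is standard from the Lebesgue decomposition of the spectral measures $\mu_{\gamma(\zeta)g}$ and requires no new argument beyond what was already used in the proofs of Theorem~\ref{ACtheorem} and Theorem~\ref{SCtheorem}.
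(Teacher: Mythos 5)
Your proposal is correct and follows exactly the route the paper intends (the paper states this as an immediate corollary and leaves the proof implicit): the standing hypothesis together with Theorem~\ref{thm:eigenvalues}~(ii) removes all eigenvalues from $(a,b)$, Theorem~\ref{SCtheorem} removes the singular continuous spectrum in item~(i), Theorem~\ref{ACtheorem} removes the absolutely continuous spectrum in item~(ii), and the decomposition $\sigma(A)=\sigma_{\rm pp}(A)\cup\sigma_{\rm sc}(A)\cup\sigma_{\rm ac}(A)$ on the open interval finishes both claims. The only cosmetic caveat is that this union need not be disjoint and $\sigma_{\rm pp}(A)$ is the closure of the eigenvalue set, but since $(a,b)$ is open, the absence of eigenvalues in $(a,b)$ indeed gives $\sigma_{\rm pp}(A)\cap(a,b)=\emptyset$, as your argument requires.
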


\begin{remark}\label{remark1}
The main results of the present paper, Theorem~\ref{thm:eigenvalues} as well as Theorem~\ref{ACtheorem} and Theorem~\ref{SCtheorem}, 
remain true when the Dirichlet operator $A$ is replaced by the selfadjoint operator $-\Delta+q$ in $L^2 (\Omega)$ subject to a Robin type boundary condition
\begin{align*}
 \Theta u |_{\partial \Omega} = \partial_\nu u |_{\partial \Omega},
\end{align*}
where $\Theta$ is a selfadjoint, bounded operator in $L^2 (\partial \Omega)$, and $M (\lambda)$ is replaced by the corresponding Robin-to-Dirichlet map $M_\Theta (\lambda) = (\Theta - M (\lambda))^{-1}$. Moreover, the results can be carried over to more general second order uniformly elliptic, formally symmetric differential expressions of the form
\begin{align*}
  \cL = - \sum_{j,k = 1}^n \partial_j a_{jk} \partial_k + \sum_{j = 1}^n \big( a_j \partial_j - \partial_j \overline a_j \big)  + a
\end{align*}
under suitable smoothness and boundedness conditions on the coefficients $a_{jk}$, $a_j$, $a$, $1 \leq j, k \leq n$, and to domains with less regular (e.g.\ Lipschitz) boundaries. 
Finally we remark that unbounded domains with 
non-compact (sufficiently regular) boundaries can be treated in almost the same way.
\end{remark}

\section*{Acknowledgement}
This research was supported by the Austrian Science Fund (FWF): Project P~25162-N26. J. Behrndt gratefully acknowledges the stimulating
atmosphere at the Isaac Newton Institute for Mathematical Sciences in Cambridge (UK)
in July and August 2012 where parts of this paper were written during the
research program {\it Spectral Theory of Relativistic Operators}. The authors wish to thank A.~Strohmaier who drew 
their attention to the article~\cite{BS01} and 
gave a very useful hint for the proof of Proposition~\ref{simplicity} in its present form.

\end{document}